\def\section{\@startsection{section}{1}%
 \z@{.7\linespacing\@plus\linespacing}{.5\linespacing}%
 {\normalfont\bfseries\scshape\centering}}
\def\subsection{\@startsection{subsection}{2}%
  \z@{.5\linespacing\@plus\linespacing}{.5\linespacing}%
  {\normalfont\bfseries\scshape}}
\def\subsubsection{\@startsection{subsubsection}{3}%
 \z@{.5\linespacing\@plus\linespacing}{-.5em}%{.5\linespacing}%
 % {\normalfont\bfseries\itshape}}
 {\normalfont\bfseries}}
\newtheorem{theorem}{Theorem}
\newtheorem{lemma}[theorem]{Lemma}
\theoremstyle{definition}
\newcommand{\beq}{\begin{equation}}
\newcommand{\eeq}{\end{equation}}
\renewcommand{\epsilon}{\varepsilon}
\renewcommand{\iota}{\phi}
\def\testb#1{\testb@i#1,,\@nil}%
\def\testb@i#1,#2,#3\@nil{%
  \draw[->, thick] (O) --++(#1);
  \ifx\relax#2\relax\else\testb@i#2,#3\@nil\fi}
\def\sequenceThreeD#1#2#3#4{%
  % #1: internal index, not shown.
  #2\quad\hbox to.6\hsize{$#3,\dots$\hfill}\quad\textrm{(#4)}
}
\long\def\greybox#1{%
    \newbox\contentbox%
    \newbox\bkgdbox%
    \setbox\contentbox\hbox to \hsize{%
        \vtop{
            \kern\columnsep
            \hbox to \hsize{%
                \kern\columnsep%
                \advance\hsize by -2\columnsep%
                \setlength{\textwidth}{\hsize}%
                \vbox{
%                    \parskip=\baselineskip
                    \parindent=0bp
                    #1
                }%
                \kern\columnsep%
            }%
            \kern\columnsep%
        }%
    }%
    \setbox\bkgdbox\vbox{
        \pdfliteral{0.9 0.9 0.9 rg}
        \hrule width  \wd\contentbox %
               height \ht\contentbox %
               depth  \dp\contentbox
        \pdfliteral{0 0 0 rg}
    }%
    \wd\bkgdbox=0bp%
    \vbox{\hbox to \hsize{\box\bkgdbox\box\contentbox}}%
   % \vskip\baselineskip%
}
\newcommand{\ns}{\mathbb{N}}%{\mbox{\bbold N}}
\newcommand{\zs}{\mathbb{Z}}%{\mbox{\bbold Z}}
\newcommand{\qs}{\mathbb{Q}}%{\mbox{\bbold Q}}
\newcommand{\cs}{\mathbb{C}}%{\mbox{\bbold C}}
\newcommand{\fps}{formal power series}
\newcommand{\bx}{\bar x}
\newcommand{\by}{\bar y}
\newcommand{\cS}{\mathcal S}
\DeclareMathOperator{\Pol}{Pol}
\DeclareMathOperator{\Disc}{Disc}
\newcommand{\gf}{generating function}
\newcommand{\gfs}{generating functions}
\def\emm#1,{{\em #1}}
\newcommand{\T}{\tilde T}
\newcommand{\Tm}{T}
\newcommand{\Zm}{Z}
\newcommand{\Z}{\tilde Z}\newcommand{\U}{\tilde U}\newcommand{\V}{\tilde V}
\begin{document}
%--------------------------------------
% 
%--------------------------------------
\title[Gessel's walks in the quadrant]
{An elementary solution of Gessel's walks\\ in the quadrant}

\author[M. Bousquet-M\'elou]{Mireille Bousquet-M\'elou}

\thanks{}
 
\address{CNRS, LaBRI, Universit\'e de Bordeaux, 351 cours de la
  Lib\'eration,  F-33405 Talence Cedex, France} 
\email{bousquet@labri.fr}

\begin{abstract}
 Around 2000, Ira Gessel conjectured that the number of lattice walks
 in the quadrant $\ns^2$,  starting and ending at the origin $(0,0)$ and  taking
 their steps in $\{
 \rightarrow,\nearrow,\leftarrow, \swarrow\}$  had a simple
hypergeometric form. In the following decade, this problem 
%became one instance
was recast in the systematic study of walks with small steps (that is,
steps in $\{-1,0,1\}^2$) confined to the quadrant.  The \gfs\ of such
walks are archetypal  solutions of \emm partial
 discrete differential equations,.  

A complete
classification of quadrant walks according to the nature of their \gf\
(algebraic, D-finite or not) is now available, but Gessel's
walks remained mysterious because they were the only model among the 23
D-finite ones that had not been given an elementary
solution. Instead, Gessel's conjecture was first proved using
an inventive  computer algebra approach in 2008.  A year later, the associated three-variate 
 \gf\ was proved to be algebraic by a computer algebra \emm tour de
 force,. This was re-proved  recently using  
%an
 elaborate complex analysis machinery. We give here an
 elementary and constructive proof.
%, which requires no guessing nor high level computer algebra. 
Our approach also solves other quadrant
 models (with multiple steps) recently proved to be algebraic via computer algebra.
\end{abstract}

\keywords{Lattice walks --- Exact enumeration --- Algebraic series}
\maketitle

%%%%%%%%%%%%%%%%%%%%%%%%%%%%%%%%%%%%%%%%%%%%%%%%%%%%%%%%%%%%%%%%%%%%%%%
\section{Introduction}
%%%%%%%%%%%%%%%%%%%%%%%%%%%%%%%%%%%%%%%%%%%%%%%%%%%%%%%%%%%%%%%%%%%%%%%
The enumeration of planar lattice walks confined to the quadrant has
received a lot of attention in the past decade.
%~\cite{BoMi10,KuRa12,BoRaSa12}. 
The basic question 
reads as follows:  given a finite step set $\cS\subset \zs^2$ and a
starting point $P\in \ns^2$, what is the number $q(n)$ of $n$-step walks,
starting from $P$ and taking their steps in $\cS$, that remain in the
non-negative quadrant $\ns^2$? This is a generic and versatile
question, since such walks encode in a natural fashion many
discrete objects (systems of queues, Young tableaux and permutations
among others). More generally, the study of these walks fits in the
larger framework of walks confined to cones. They are also much
studied in probability theory, both in a discrete~\cite{duraj,FaIaMa99} and
a continuous~\cite{deblassie,garbit-raschel} setting. From a technical
point of view, counting walks in the quadrant is part of a general
program aiming at solving \emm partial discrete differential
equations,, see~\eqref{nseo} and~\eqref{eqQ} below. Details are given in Section~\ref{sec:pdde}, together with
a more algebraic viewpoint involving  division of power series.

On the combinatorics side, much attention has  focused on the \emm
nature, of the associated \gf\ $Q(t)=\sum_n q(n) t^n$. Is it 
rational in $t$, as  for unconstrained walks? Is it algebraic over
$\qs(t)$, as for walks confined to a (rational) half-space? More
generally, is it  D-finite, that is, a solution of a linear differential equation
with polynomial coefficients?  The answer depends on the
step set, and, to a lesser extent, on the starting point. 

A systematic
study was initiated in~\cite{BoMi10,mishna-jcta} for walks
starting at the origin $(0,0)$ and taking only \emm small, steps (that
is, $\cS\subset \{-1, 0, 1\}^2$). For these walks, a complete classification is now
available. In particular, the \gf\ $Q(t)$ (or rather, its three-variate
refinement $Q(x,y;t)$ that also records the coordinates of the endpoint of the walk)
is D-finite if and only if a certain group of rational transformations
is finite. The proof involves an attractive variety of tools, ranging
from basic  power series algebra~\cite{Bous05,BoMi10,mishna-jcta} to complex analysis~\cite{KuRa12,raschel-unified},
computer algebra~\cite{BoKa10,KaKoZe08} and number theory~\cite{BoRaSa12}.

In the  D-finite class lie, up to symmetries,
exactly 23 step sets $\cS$ (often called \emm models,) with small steps.
%Among them, 19 are transcendental. 
A uniform approach,  the \emm algebraic
kernel method,,  establishes the
D-finiteness of 19 of them~\cite{BoMi10}. These 19 models are
transcendental.
%~\cite{}. 
The remaining 
4 models are  structurally simpler, since they are algebraic,  but
they are also harder
to solve. Three of them, including the so-called Kreweras' model
$\cS=\{\leftarrow, \downarrow, \nearrow\}$, were solved in a uniform manner
in~\cite{BoMi10}, following several \emm ad hoc,
proofs~\cite{kreweras,gessel-proba,Bous05,bousquet-versailles}. The final one, $\cS=\{
\rightarrow,\nearrow, \leftarrow, \swarrow \}$, 
%is both attractive and mysterious. 
has been recognized as extremely challenging, and has been promoted
recently as a
popular model in the French magazine \emm Pour la science,~\cite{pls}.
It is
named after Ira Gessel, who conjectured, around 2000,
that for this model the
number of walks of length $2n$ ending at the origin is
\beq\label{Q00-conj}
%q(0,0;2n)= 
16^n\, \frac{(5/6)_n (1/2)_n}{(5/3)_n (2)_n},
\eeq
where $(a)_n=a(a+1) \cdots (a+n-1) $ is the ascending
factorial. This is sequence A135404 is Sloane's \emm Encyclopedia of
integer sequences,~\cite{oeis}. 
 The associated \gf\ is algebraic, but no one noticed it at that time.
%(of degree~8). 
This attractive conjecture remained open for several years. In particular, it resisted the otherwise complete
solution of D-finite models presented in~\cite{BoMi10}. Finally, Kauers,
Koutschan and Zeilberger~\cite{KaKoZe08} succeeded in
proving~\eqref{Q00-conj} via a clever computer-aided approach. About a year later, this was generalized by
Bostan and Kauers~\cite{BoKa10}, who proved, by a \emm tour de force, in computer algebra,
that the \emm complete \gf, $Q(x,y;t)$ is also algebraic.
%, over $\qs(x,y,t)$. 
This series is defined as
\beq\label{complete}
Q(x,y;t)=\sum_{i,j,n\ge 0} q(i,j;n) x^i y^j t^n,
\eeq
where $q(i,j;n)$ is the number of $n$-step walks in the quarter plane
that start from $(0,0)$,  end at $(i,j)$ and take their steps in
$\cS$. A complete description, and rational parametrization of this
series is given in Theorem~\ref{thm:main} below. Both the result of~\cite{BoKa10} and
the immense computational effort it required were remarkable, but they
left open the problem of
finding a ``human'' proof of Gessel's conjecture, and of its
trivariate generalization. More recently, Bostan,
Kurkova and Raschel~\cite{BoKuRa13} proposed another proof, which  involves the
deep machinery of complex analysis first developed for stationary random walks~\cite{FaIaMa99},
and subsequently for counting walks in the quadrant~\cite{KuRa12,raschel-unified,FaRa10}. The solution
involves Weierstrass' $\wp$ and $\zeta$  functions, which
are transcendental, and  may appear a surprisingly complicated
detour to establish algebraicity. Let us also cite a number of other
attempts~\cite{ayyer-gessel,FaRa10,KaZe08,KuRa11,PeWi08,raschel-unified,sun}. Some
of them can be seen as preliminary steps to
the proofs of~\cite{KaKoZe08} or~\cite{BoKuRa13}. 

\medskip
In this paper, we present the first elementary solution of Gessel's
walks in the quadrant. It is elementary in the sense that it remains
at the level of \fps\ and polynomial equations. Also, it involves no
guessing nor high-level computer algebra\footnote{Although one feels
  better with a computer at hand when it comes  to handling a system
  of three polynomial equations. An accompanying Maple session is
  available on the author's \href{http://www.labri.fr/perso/bousquet/publis.html}{webpage}.}. Finally, it is constructive,
in the sense that we construct, incrementally, the extensions of
$\qs(x,y,t)$ that are needed to describe the series $Q(x,y;t)$. Even
if this sounds a bit technical at this stage, we can inform the
quadrant experts  that the key ingredient 
%of our solution 
is that the
symmetric functions of the roots of the kernel are polynomials in
$1/x$. This is also the case for Kreweras' walks, and our proof is in
this sense close to one solution of Kreweras' model~\cite{bousquet-versailles}. 

\begin{theorem}\label{thm:main}
  The \gf\ $Q(x,y;t)$ is algebraic over $\qs(x,y,t)$, of degree $72$.
  The specialization $Q(0,0;t)$ has degree $8$, and can be written as
$$
Q(0,0;t)=\frac{32\,\Zm ^3(3+3\Zm -3\Zm ^2+\Zm ^3)}{(1+\Zm )(\Zm ^2+3)^3},
$$
where $Z=\sqrt T $ and $T$ is the only power series in $t$ with
constant term 
$1$
%(resp. $1$)
satisfying  
\beq\label{T-def}
%\Zm^2= 1+256\, t^2 \frac{\Zm^6}{(\Zm^2+3)^3}.
T= 1+256\, t^2 \frac{T^3}{(T+3)^3}.
\eeq

The series $Q(xt,0;t)$ is an even series in $t$, with coefficients in
$\qs[x]$, and is cubic over $\qs(Z,x)$. It can be written as
$$
Q(xt,0;t)=\frac
%{16Z^2(UZ^2-2Z^2+U)N(U,Z)}{(Z^2-1)(Z^2+3)^3(U+Z)(U+3Z+Z^2-UZ)(3Z-U-Z^2-UZ)}
{16\,T(U+UT-2T)M(U,Z)}{(1-T)(T+3)^3(U+Z)(U^2-9T+8TU+T^2-TU^2)}
$$
where
\begin{multline*}
  M(U,Z)=(T-1)^2U^3+Z(T-1)(T-16Z-1)U^2\\-TU(T^2+16ZT-82T-16Z+17)
-ZT(T^2-18T+128Z+81),
\end{multline*}
and $U$ is the only power series in $t$ with constant term $1$ (and
coefficients in $\qs[x]$) satisfying 
$$
16T^2(U^2-T)=%x(2T-U-UT)(9T-U^2-8TU-T^2+TU^2).
x(U+UT-2T)(U^2-9T+8TU+T^2-TU^2).
$$

Finally, the series $Q(0,y;t)$ is cubic over $\qs(Z,y)$. It can be
written as
$$
Q(0,y;t)=\frac{16\,VZ^3(3+V+T-VT)N(V,Z)}{(T-1)(T+3)^3(1+V)^2(1+Z+V-VZ)^2}
$$
where 
\begin{multline*}
N(V,Z)=(Z-1)^2(T+3)V^3+(T-1)(T+2Z-7)V^2\\
+(T-1)(T-2Z-7)V +(Z+1)^2(T+3)  ,
\end{multline*}
and $V$ is the only series in $t$, with constant term $0$,
satisfying 
$$
1-T+3V+VT= yV^2(3+V+T-VT).
$$
\end{theorem}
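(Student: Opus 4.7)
I would start from the standard step-by-step functional equation
\[
xy\,K(x,y;t)\,Q(x,y;t)=xy-ty\,Q(0,y;t)-t\bigl(Q(x,0;t)+Q(0,y;t)-Q(0,0;t)\bigr),
\]
where $K(x,y;t)=1-t(x+xy+\bx+\bx\by)$ is the kernel of $\cS=\{\rightarrow,\nearrow,\leftarrow,\swarrow\}$. Viewed as a polynomial in $y$, the kernel has two roots $Y_0,Y_1$ with symmetric functions
\[
Y_0+Y_1=\tfrac{\bx}{t}-1-\bx^{2},\qquad Y_0 Y_1=\bx^{2},
\]
both of which are polynomials in $\bx$ over $\qs(t)$---the structural feature flagged in the introduction. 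Substituting $y=Y_i$ kills the left-hand side and produces
\[
t(Y_i+1)\,Q(0,Y_i;t)=xY_i-t\,Q(x,0;t)+t\,Q(0,0;t),\qquad i=0,1.
\]

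Only $Y_0$ is a genuine power series in $t$, so neither of the two relations above is by itself admissible as a formal identity. However, any \emph{symmetric} combination of the two is, because the power sums $Y_0^{k}+Y_1^{k}$ are Laurent polynomials in $\bx$. The heart of the proof is to choose symmetric combinations of the kernel equations so as to extract two closed relations---one between $Q(x,0;t)$ and $Q(0,0;t)$, another between $Q(0,y;t)$ and $Q(0,0;t)$---by comparing positive and negative powers of $x$ on the two sides (the left being rational in $\bx$, the right polynomial in $x$). This is precisely the strategy used for Kreweras' model in~\cite{bousquet-versailles}, where the analogous identity $Y_0Y_1=\bx^{2}$ is also available.

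With these two boundary equations in hand, the remaining task is to solve them by \emph{rational parametrization}. The series $T$ of~\eqref{T-def} should emerge naturally as the uniformizer of the algebraic curve governing the scalar $Q(0,0;t)$; because $T$ lies in $\qs[[t^2]]$, its square root $Z=\sqrt T$ is itself a power series in $t$, and $Q(0,0;t)$ becomes rational in $Z$. Similarly, the auxiliary series $U$ and $V$ must be identified as local uniformizers for $Q(xt,0;t)$ and $Q(0,y;t)$: each arises by inverting a cubic over $\qs(T,x)$ or $\qs(T,y)$. Once the explicit parametrizations of the theorem are on the table, the verification reduces to a (large but mechanical) polynomial identity in $T,U,V,x,y$. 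The full series $Q(x,y;t)$ is then recovered by solving the kernel equation for it in terms of $Q(x,0;t)$ and $Q(0,y;t)$, which accounts for the degree $72$.

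The main obstacle is the absence of a finite group of the walk: the orbit-sum method that trivializes the other 22 D-finite small-step models is unavailable, so a genuinely new device is required. That device is the ``polynomial in $\bx$'' observation above, which lets a \emph{two-root} symmetrized kernel method---in place of a full orbit sum---produce a closed algebraic system. The truly delicate point is then to \emph{guess} the uniformizer $T$: a priori it is only visible as the discriminant of a cubic that surfaces when one combines the two kernel equations, and certifying that the implicit series singled out by~\eqref{T-def} genuinely describes $Q(0,0;t)$ is the technical core of the argument.
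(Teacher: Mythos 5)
Your proposal correctly isolates the structural fact that drives the paper's proof (the symmetric functions $Y_0+Y_1$ and $Y_0Y_1$ are polynomials in $\bx$), but the route you sketch from there has a genuine gap, plus one factual error. The error first: Gessel's model \emph{does} have a finite group, of order $8$, generated by $\Phi:(x,y)\mapsto(\bx\by,y)$ and $\Psi:(x,y)\mapsto(x,\bx^2\by)$ --- that is precisely why it is D-finite. The obstacle is not the absence of a group but the fact that the orbit-sum machinery of~\cite{BoMi10}, which handles the $19$ transcendental D-finite models, yields nothing here. The gap is more serious: the relation $t(1+Y_1)Q(0,Y_1;t)=xY_1-tQ(x,0;t)+tQ(0,0;t)$ that you propose to symmetrize is not valid, because $Q(x,Y_1)$ is not a well-defined series (the valuation bound $n+i-j\ge n/2$ makes $Q(0,Y_1)$, hence $S(Y_1)=t(1+Y_1)Q(0,Y_1)$, meaningful, but not $Q(x,Y_1)$ itself). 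Symmetrizing your two root equations would give $S(Y_0)+S(Y_1)=x(Y_0+Y_1)-2R(x)$ with $R(x)=t(Q(x,0)-Q(0,0))$, and this identity is \emph{false}. The correct one, obtained in the paper by combining the four admissible pairs in the group orbit of $(x,Y_0)$ --- in particular $(xY_0,Y_1)$, which is what legitimately brings $S(Y_1)$ into play --- is $S(Y_0)+S(Y_1)=R(\bx)+\bx+S(0)$. Note that it couples $R(x)$ with $R(\bx)$; no symmetric combination of your two (invalid) substitutions can produce that coupling, so the group of order $8$ is not dispensable here.

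That coupling is also why the second half of your plan underestimates the remaining work. Extracting the non-negative part of the product $(S(Y_0)-xY_0)(S(Y_1)-xY_1)$ leaves a hybrid term $R(x)R(\bx)$, and the system does not close by ``comparing positive and negative powers of $x$'' alone. The paper's device is to multiply the resulting quadratic relation by $R(x)-R(\bx)+\bx-x$ so as to obtain $P(x)=P(\bx)$ for an explicit cubic expression $P$, conclude that $P$ is a symmetric Laurent polynomial of degree $1$, and thereby reach an \emph{ordinary} discrete differential equation of degree $3$ and order $3$ for $R(x)$ involving three unknown univariate series $S(0)$, $R'(0)$, $R''(0)$. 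These are then pinned down by the generalized quadratic method of~\cite{mbm-jehanne} (three series $X_0,X_1,X_2$ cancelling $\partial\Pol/\partial x_0$, hence nine polynomial relations), and only afterwards do $T$, $U$, $V$ appear, as computed rational parametrizations of the resulting curves --- not as guessed uniformizers to be certified a posteriori. Your closing step, which defers to guessing $T$ and verifying a ``large but mechanical polynomial identity'', would turn the argument back into a verification of the kind the paper is explicitly trying to supersede.
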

The algebraicity was established by Bostan and Kauers, and the
parametrization was given by van Hoeij in the appendix of their
paper~\cite{BoKa10} (with his notation, $T=v$, $U=u$ and $V=-w$). The
parametrization that  we construct through our proof is actually a bit different
%maybe more combinatorial 
(and of course equivalent), but we prefer to give van
Hoeij's to avoid confusion. Just for the record, ours reads as
follows:
\beq\label{T-param-original}
\T= t^2(1-\T)(1+3\T)^3, \qquad \qquad \Z=\T(1-\Z+\Z^2),
%\frac{(1-\Z)^2(1+\Z)^6}{(1-\Z+\Z ^2)^3}= ,
\eeq
$$
\U =1+x\Z \frac{(\U +\Z -\U \Z +\U \Z ^2)(\U -\U \Z -\Z ^3+\U \Z ^2)}{\U (\Z -1)(\Z +1)^3(\Z ^2+\U )},
$$
$$
\V =y\frac{(\V -1)^2(1-\Z -\V +\Z ^2-\Z ^3)}{(1-\Z +\V -\Z ^2)^2},
$$
with constant terms $0$ for $\T$ and  $\Z$, 1 for $\U $ and $y/(1+y)$
for $\V$.
\begin{comment}Then
$$
Q(0,0)=\frac{(1-\Z)(1+\Z)^3(1-3\Z+3\Z^2+\Z^3)}{ (1-\Z+\Z^2)^3},
$$
$$
Q(xt,0;t)=
\frac{(1-Z)(1+Z)^3N(U,Z)}{U(U+Z^2)(1-Z+Z^2)^3(U+Z-UZ+UZ^2)(U-UZ-Z^3+UZ^2)}
$$
where
$$
N(U,Z)=
\left((1-Z+Z^2)U^4-Z^2U^2(1-2Z-Z^2+4Z^3-8Z^4+6Z^5-2Z^6+Z^8)+UZ^5(2-2Z+Z^3-2Z^4)\right)
$$
(The case $U=1$ gives and expression of $Q(0,0;t)$.)
\end{comment}

\medskip
We conclude this introduction with some notation.
For a ring $R$, we denote by $R[x]$
% (resp.~$R[[x]]$) 
the ring of
polynomials %(resp. formal power series) 
in $x$ with coefficients in
$R$. If $R$ is a field, then $R(x)$ stands for the field of rational functions
in $x$. 
%This notation is generalised to several variables in the usual way [{\bf utile ?}]. 
%For instance, $Q(x,y;t)$ is a series of $\qs[x,y][[t]]$.
Finally, if $F(x;t)$  is a power series in~$t$ whose coefficients are
Laurent polynomials in $x$, say
$$
F(x;t)=\sum_{n\ge0, \ i\in \zs}f(i;n) t^n x^i,
$$
 we denote by $[x^{\ge}]F(x;t)$ the \emm non-negative part of $F$ in $x$,:
$$
 [x^{\ge}]F(x;t):= \sum_{n\ge 0,\  i\in \ns}  f(i;n) t^n x^i.
$$
This generalizes the standard notation $[x^i]F(x;t)$ for the
coefficient of $x^i$ in $F$. We  similarly define the non-positive part
of $F(x;t)$, denoted by $[x^\le]F(x;t)$.

%%%%%%%%%%%%%%%%%%%%%%%%%%%%%%%%%%%%%%%%%%%%%%%%%%%%%%%%%%%%%%%%%%%%%%%
\section{Context: partial discrete differential equations}
\label{sec:pdde}
%%%%%%%%%%%%%%%%%%%%%%%%%%%%%%%%%%%%%%%%%%%%%%%%%%%%%%%%%%%%%%%%%%%%%%%
The starting point of the systematic approach to the enumeration of
quadrant walks is a functional equation that characterizes their complete
\gf \ $Q(x,y;t)\equiv Q(x,y)$, defined as in~\eqref{complete}. For
instance, for square lattice walks (with steps $\rightarrow, \uparrow,
 \leftarrow,\downarrow $) starting at $(0,0)$, this equation
reads:
\beq\label{nseo}
Q(x,y)=1+ t(x+y) Q(x,y) + t\, \frac{Q(x,y)-Q(0,y)}x +
t\,\frac {Q(x,y)-Q(x,0)}y.
\eeq
It translates the fact that, to construct a  walk of length
$n\ge 1$, we simply add a step to a shorter walk of length $n-1$. The
divided differences (or \emm discrete derivatives,)
\beq\label{dd}
\frac{Q(x,y)-Q(0,y)}x \qquad\hbox{and} \qquad
\frac {Q(x,y)-Q(x,0)}y
\eeq
arise from the fact that  one cannot
 add a West step if the shorter walk  ends on the $y$-axis,
nor a South step  if it ends on the $x$-axis. For Gessel's walks, with
steps $ \rightarrow,\nearrow,\leftarrow, \swarrow$, the corresponding
equation reads
\begin{multline}\label{eqQ}
  Q(x,y)=1+ t(x+xy) Q(x,y) + t\, \frac{Q(x,y)-Q(0,y)}x\\ + t\,
\frac{Q(x,y)-Q(x,0)-Q(0,y)+Q(0,0)}{xy},
\end{multline}
because the $\swarrow$ step cannot be appended to a walk ending on the
$x$- or $y$-axis. Refering to terms like~\eqref{dd} as \emm discrete
derivatives,, it makes sense to call such equations \emm partial discrete
differential equations,. 

The more algebraically inclined reader will write~\eqref{nseo} as
$$
xy=\left(xy-t(x^2y+xy^2+x+y)\right)Q(x,y)+txQ(x,0)+tyQ(0,y),
$$
and observe that we are just performing the formal division of
$xy$ by the polynomial $\left(xy-t(x^2y+xy^2+x+y)\right)$ with initial
monomial $xy$ (this is an instance of Grauert-Hironaka-Galligo
division, see e.g.~\cite[Thm.~10.1]{rond}).   

Discrete differential  equations are ubiquitous in combinatorial enumeration. For
instance, for the much simpler problem of counting walks with  steps
$\pm1$  starting at $0$ and remaining at a non-negative level, the
corresponding functional equation reads
$$
F(x)\equiv F(x;t)= 1+ tx F(x) + t\, \frac{F(x)-F(0)}x,
$$
an \emm ordinary, discrete differential equation (since the
derivatives are only taken with respect to $x$). More generally, the
enumeration of lattice walks confined to a rational half-space
systematically yields \emm linear ordinary, discrete differential
equations (DDEs)~\cite{BaFl02,bousquet-petkovsek-1}. Note that the
order of the equation increases with the size of the down steps: if,
in the above problem, we replace the step $-1$ by a step $-2$, the
equation reads
$$
F(x)=1+txF(x)+ t\, \frac{F(x)-F(0)-xF'(0)}{x^2},
$$
and involves a discrete derivative of order 2.  Moving beyond linear
equations, the enumeration of
\emm maps, (connected graphs properly embedded in a surface of prescribed
genus) has produced since the sixties a rich collection of \emm
non-linear, ordinary discrete differential
equations (see e.g.~\cite{bender-canfield,brown-tutte-non-separable,tutte-triangulations}). For instance, the enumeration of planar maps 
%by their edge number 
is
governed by the following equation:
$$
F(x;t)= F(x)= 1+ tx^2F(x)^2+ tx\, \frac{xF(x)-F(1)}{x-1}.
$$
(Note that the discrete derivative is now taken at $x=1$.) 

An approach for
solving linear  ordinary DDEs was initiated by Knuth
in the seventies~\cite[Section~2.2.1, Ex.~4]{knuth} and is now known as the \emm kernel
method,~\cite{hexacephale,bousquet-petkovsek-1}. Another approach, developed by Brown in the sixties and
known as the \emm quadratic method,, solves  quadratic ordinary 
DDEs of first order~\cite{brown-square,goulden-jackson}. 
More recently, this approach was generalized by the author
and A.~Jehanne~\cite{mbm-jehanne} to ordinary DDEs of any degree and order, with the
striking result that \emm any series solution of a (well-founded) equation
of this type is algebraic,\footnote{As learnt recently by the
  author, this algebraicity also follows from a difficult theorem on
  Artin's approximation with \emm nested
  conditions,, see~\cite{kurke} or~\cite[Thm.~1.4]{popescu}.}. This result is of particular
importance to this paper, since the core of our proof is to derive
from the \emm partial, DDE~\eqref{eqQ} satisfied by $Q(x,y)$ an \emm ordinary,
DDE satisfied by $Q(x,0)$. This is achieved
in Section~\ref{sec:ordinaryDDE}. This ordinary DDE  (of order 3
and degree 3)  is then 
solved using the method of~\cite{mbm-jehanne}, and found, of course,
to have an algebraic solution.    

\medskip
If ordinary DDEs are now well-understood, much less is known about
partial DDEs, even linear ones. The enumeration of quadrant walks with
small steps
appears as an essential step in their study, because the resulting
equations are  as simple as possible:  linear, first order, and
 involving derivatives with respect to two variables only. This study
 started  around 2000, and it was soon understood that the
 solutions would not always be algebraic, nor even D-finite~\cite{BoPe03,MiRe09}.
 The classification of these problems is now complete. That is, one knows
 for every step set $\cS\subset \{-1,0,1\}^2$ if the series $Q(x,y;t)$
 is rational, algebraic, D-finite or not~\cite{BoKa10,BoRaSa12,BoMi10,KuRa12,MiRe09,MeMi13}. A crucial role is played by
 a certain group of rational transformations associated with $\cS$:
 the series $Q(x,y;t)$ turns out to be D-finite if and only if this
 group is finite. In this case, a generic method solves the partial
 DDE associated with the problem~\cite{BoMi10}, except for four sets $\cS$ for which
 the series $Q(x,y;t)$ is in fact algebraic. Three of these four problems were solved
 in a uniform way in~\cite{BoMi10}. The fourth one is Gessel's
 model, the story of which we have told in the introduction. This
 paper thus adds a building block to the classification of 
 quadrant walks, by deriving from the partial DDE~\eqref{eqQ}, in a
 constructive manner,  an
 algebraic equation for $Q(x,y)$.

To finish, let us mention a few investigations on more general partial
DDEs: quadrant walks with larger backwards steps yield linear
equations of larger
order~\cite{FaRa15}; coloured planar maps yield non-linear
 equations~\cite{bernardi-mbm-alg,mbm-bcc}; walks confined
to the non-negative octant $\ns^3$ yield linear  equations with
three differentiation variables~\cite{BoBoKaMe};  the famous enumeration of permutations
with bounded ascending subsquences (and in fact, many problems related
to Young tableaux, plane partitions or alternating sign matrices~\cite{zeilberger-asm})
yields linear equations with an arbitrary number of differentiation
variables~\cite{mbm-monotone}.

%%%%%%%%%%%%%%%%%%%%%%%%%%%%%%%%%%%%%%%%%%%%%%%%%%%%%%%%%%%%%%%%%%%%%%%
\section{The proof}
%%%%%%%%%%%%%%%%%%%%%%%%%%%%%%%%%%%%%%%%%%%%%%%%%%%%%%%%%%%%%%%%%%%%%%%
%\subsection{The main functional equation, the kernel and its roots}
We begin, in a standard way, by writing a functional equation
satisfied by $Q(x,y;t)$. It is based on a step by step construction
of walks~\cite{BoMi10}: 
$$
Q(x,y)=1+t(\bx+\bx\by+x+xy)Q(x,y)
-t\bx(1+\by)Q(0,y)-t\bx\by\left(Q(x,0)-Q(0,0)\right),
$$
where $Q(x,y)\equiv Q(x,y;t)$, $\bx=1/x$ and $\by=1/y$. Equivalently,
\begin{equation}
%\left( xy-t(1+y)(1+x^2y)\right)
 xyK(x,y) Q(x,y) = xy-t(Q(x,0)-Q(0,0))-t(1+y)Q(0,y) ,
\label{qdp-gessel1}
\end{equation}
where 
$$K(x,y)=1-t(\bx+\bx\by+x+xy)
$$
is the \emm kernel, of the equation. 
%
%The kernel 
This Laurent polynomial is left invariant by the following two rational
transformations:
$$
  \Phi: (x,y) \mapsto (\bx\by,y) \quad \hbox{and} \quad 
\Psi:=(x,y) \mapsto (x,\bx^2\by).
$$
Both are involutions, and they generate a group $G$ of order 8:
\begin{multline*}
(x,y)  
 {\overset{\Phi}{\longleftrightarrow}} (\bx \by ,y)
 {\overset{\Psi}{\longleftrightarrow}} (\bx\by,x ^2y)
 {\overset{\Phi}{\longleftrightarrow}} (\bx,x^2y)
 {\overset{\Psi}{\longleftrightarrow}} (\bx,\by)
 {\overset{\Phi}{\longleftrightarrow}}\\ (xy ,\by)
 {\overset{\Psi}{\longleftrightarrow}} (xy,\bx ^2\by)
 {\overset{\Phi}{\longleftrightarrow}} (x,\bx^2\by)
 {\overset{\Psi}{\longleftrightarrow}} (x,y).
\end{multline*}
The construction of this group is also standard (see for instance~\cite{BoMi10}).

%===========================================================
\subsection{Canceling the kernel}
%=========================================================
As a polynomial in $y$, the kernel $K(x,y)$ has two roots, which are
series in $t$ with coefficients in $\qs[x,\bx]$:
\begin{align*}
Y_0(x)&=\displaystyle \frac{1-t(x+\bx) -\sqrt
%{1-t^2(1+x)(1+\bx)}
{ (1-t(x+\bx))^2-4t^2}
}{2tx}=
 &\bx t
%+(1+\bx^2) t^2 + O(t^3)
+O(t^2) , \\
\\
 Y_1(x)&=\displaystyle \frac{1-t(x+\bx) +\sqrt
%{1-t^2(1+x)(1+\bx)}
{ (1-t(x+\bx))^2-4t^2}
}{2tx}
=\displaystyle\frac \bx t -(1+\bx ^2)\hskip -4mm &-\ \bx t + O(t^2).
%-(1+\bx^2) t^2 + O(t^3) .
\end{align*}
Observe  that the series  $xY_i(x)$ are symmetric in $x$ and $\bx$:
$$\bx Y_i(\bx)=xY_i(x).$$
Moreover, the elementary symmetric functions of the $Y_i$, namely
\begin{equation}
Y_0+Y_1= -1+\frac \bx t -\bx^2 \qquad \hbox{and}
\qquad Y_0Y_1 = \bx^2 ,
\label{symmetricG}
\end{equation}
 are polynomials in $\bx =1/x$. This property also holds for
Kreweras walks~\cite{bousquet-versailles,Bous05}, and plays a crucial role in our proof. The following
lemma tells us how to extract the constant term of a symmetric
polynomial in $Y_0$ and $Y_1$.

\begin{lemma}
\label{lemma-symmetric-G}
Let $P(u,v) $ be a symmetric
%power series in $t$ with coefficients in
polynomial in $u$ and $v$ with coefficients in~$\qs$. Then
$P(Y_0,Y_1)$ is a 
%if well-defined, is a power series in $t$ with 
polynomial  in $\bx$ with coefficients in $\qs[1/t]$. Its constant
term in $x$ (equivalently, its
non-negative part in $x$)
%of this series, taken with respect to $\bx$, 
is $P(0,-1)$.
\end{lemma}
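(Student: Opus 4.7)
The plan is to reduce everything to the two elementary symmetric functions of $Y_0,Y_1$, which by~\eqref{symmetricG} are already polynomials in $\bar x$ with coefficients in $\qs[1/t]$.

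First, I would invoke the fundamental theorem of symmetric polynomials: since $P(u,v)$ is symmetric with coefficients in $\qs$, there is a polynomial $R(s,p) \in \qs[s,p]$ such that
$$P(u,v) = R(u+v,\, uv).$$
Substituting $u=Y_0$, $v=Y_1$ and using~\eqref{symmetricG} gives
$$P(Y_0,Y_1) = R\!\left(-1+\frac{\bar x}{t}-\bar x^{2},\ \bar x^{2}\right),$$
which is manifestly a polynomial in $\bar x$ with coefficients in $\qs[1/t]$. This proves the first assertion.

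For the second assertion, note that a polynomial in $\bar x$ has only non-positive powers of $x$, so its non-negative part in $x$ equals its constant term, obtained by setting $\bar x=0$. Doing this in the formula above yields $R(-1,0)$. But $R(-1,0)=P(u,v)$ for any $(u,v)$ with $u+v=-1$ and $uv=0$, and in particular $(u,v)=(0,-1)$. Therefore the non-negative part in $x$ of $P(Y_0,Y_1)$ is $P(0,-1)$, as claimed.

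There is essentially no obstacle: the whole argument rests on the observation already singled out in~\eqref{symmetricG} that the elementary symmetric functions of $Y_0$ and $Y_1$ are polynomials in $\bar x$ (with no positive powers of $x$), a feature specific to Gessel's kernel (and to Kreweras') that will be exploited repeatedly in the sequel.
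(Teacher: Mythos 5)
Your proof is correct. The first assertion is handled exactly as in the paper: the fundamental theorem of symmetric polynomials combined with~\eqref{symmetricG}. For the second assertion, however, you take a genuinely different and arguably more conceptual route. The paper reduces by linearity to the monomial symmetric polynomials $u^mv^n+u^nv^m$, disposes of the case $\min(m,n)>0$ via the factor $Y_0Y_1=\bx^2$, and then proves by induction on $n$ that the constant term of $Y_0^n+Y_1^n$ is $(-1)^n$ (or $2$ for $n=0$). You instead observe that, writing $P(u,v)=R(u+v,uv)$, the constant term in $x$ is obtained by the specialization $\bx\mapsto 0$, i.e.\ $(u+v,uv)\mapsto(-1,0)$, and that this pair of elementary symmetric values is realized by the multiset $\{0,-1\}$ (the roots of $w^2+w$), whence the answer $P(0,-1)$ with no case analysis and no induction. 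This explains \emph{why} the constant term is an evaluation of $P$ at a specific point, which the paper's computation only verifies a posteriori; the paper's version, on the other hand, makes the vanishing for $\min(m,n)>0$ and the sign $(-1)^n$ explicit, which is the form in which the lemma is actually invoked later (e.g.\ to see that $S(Y_0)S(Y_1)$ contributes $S(0)S(-1)$). Both arguments are complete; yours is shorter and generalizes immediately to any kernel whose elementary symmetric functions are polynomials in $\bx$.
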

\begin{proof} The first statement follows from~\eqref{symmetricG} and the fact that every symmetric
   polynomial in $u$ and $v$ is a polynomial in $u+v$ and $uv$. 
  By linearity, it suffices to check the second statement when
  $P(u,v)=u^mv^n+u^nv^m$. If $\min(m,n)>0$, then $P(Y_0,Y_1)$ has a
  factor $Y_0Y_1=\bx^2$ and its constant term is $0$. If
  $P(u,v)=u^n+v^n$, then one proves by induction on $n$ that the
  constant term is~$2$ if $n=0$, and $(-1)^n$ otherwise.
\end{proof}
\noindent{\bf Application.} We will typically apply this lemma to prove that if  $F(u,v;t)$ is
a series in $t$ whose coefficients are symmetric polynomials in $u$
and $v$, with rational coefficients, and if $F(Y_0,Y_1;t)$ is
well-defined as a series in $t$ (with coefficients in $\qs[x,\bx]$),
then its coefficients actually lie in $\qs[\bx]$.

\medskip
 Consider now the orbit of $(x,Y_0)$ under the action of the group
 $G$:
\begin{multline*}
(x,Y_0)  
 {\overset{\Phi}{\longleftrightarrow}} (xY_1 ,Y_0)
 {\overset{\Psi}{\longleftrightarrow}} (xY_1,x ^2Y_0)
 {\overset{\Phi}{\longleftrightarrow}} (\bx,x^2Y_0)
 {\overset{\Psi}{\longleftrightarrow}} (\bx,x^2Y_1)
 {\overset{\Phi}{\longleftrightarrow}}\\ (xY_0 ,x^2Y_1)
 {\overset{\Psi}{\longleftrightarrow}} (xY_0,Y_1)
 {\overset{\Phi}{\longleftrightarrow}} (x,Y_1).
% {\overset{\Psi}{\longleftrightarrow}} (x,y).
\end{multline*}
%
%, shown in Figure~\ref{diagram}. 
By construction, each pair
 $(x',y')$ in this orbit cancels the kernel $K(x,y)$. If moreover the
 series $Q(x',y')$ is well-defined, then we derive from~\eqref{qdp-gessel1} that
$$
R(x')+S(y')=x'y',
$$
where we use the notation
\beq\label{RS-def}
R(x)=t(Q(x,0)-Q(0,0)), \qquad S(y)=t(1+y)Q(0,y) .
\eeq

\begin{comment}\begin{figure}[hbt]
\begin{center}
\input{diagramG.pdf_t}
\end{center}
\caption{The orbit of  $(x,Y_0)$ under the action of  $\Phi$ and
  $\Psi$. The framed elements $(x',y')$ are those for which $Q(x',y')$
is well-defined.}
\label{diagram}
\end{figure}
\end{comment}

Since $Y_0$ is a power series in $t$, the pairs $(x,Y_0)$ and $(\bx,
x^2Y_0)$ can obviously be substituted for $(x,y)$ in $Q(x,y)$. Given
that $xY_0$ is symmetric in $x$ and $\bx$, these
pairs are derived from one another by replacing $x$ by $\bx$. One has
to be more careful with pairs involving $Y_1$, since this series
contains a term $\bx/t$. With the step set that we consider, and the
non-negativity conditions imposed by the quadrant, one easily
checks that each monomial $x^i y^j t^n$ occurring in the series
$Q(x,y)$ satisfies $n+i-j \ge n/2$. Given that $Y_0=\Theta(t)$ and
$Y_1=\Theta(1/t)$, this implies that the pair $(xY_0,Y_1)$ and its
companion $(xY_0,x^2Y_1)$ (obtained by replacing $x$ by $\bx$) can be
substituted for $(x,y)$ in $Q(x,y)$, so that $Q(xY_0,Y_1)$ is a series
in $t$ with coefficients in $\qs[x, \bx]$. We will not use the other
pairs of the orbit, and the reader can check that they do not give
well-defined series $Q(x',y')$.

We thus obtain a total of four equations:
\begin{align}
  R(x)+S(Y_0) & = xY_0 \label{eqG1}\\
R(xY_0)+S(Y_1) & = \bx,\\
R(\bx)+S(x^2Y_0) & = xY_0,\\
R(xY_0)+S(x^2Y_1) & = x.
\end{align}

%Note that they form two groups of two equations (replace $x$ by $\bx$). 

%===================================================================
\subsection{An equation relating $\boldsymbol{R(x)}$ and
  $\boldsymbol{R(\bx)}$}
%================================================================
%We now have two unknown series, $R$ and $S$, instead of one in the
%(symmetric) Kreweras model. 
We will now construct from the above system
% form, and exploit, 
two
identities that are symmetric in $Y_0$ and $Y_1$, and extract their
non-negative parts in $x$.

We first  sum  the first two equations, and subtract the  last two:
$$
R(x)-S(x^2Y_0)-S(x^2Y_1)+x = R(\bx)-S(Y_0)-S(Y_1)+\bx .
$$
For two indeterminates $u$ and $v$, the coefficient of $t^n$ in  the
series $S(u)+S(v)$ is a symmetric polynomial in $u$ and $v$. Applying
Lemma~\ref{lemma-symmetric-G} to this coefficient (for any $n$) shows that
%the fact that the symmetric functions of $Y_0$ and $Y_1$ are
%$x$-nonpositive implies that 
the right-hand side of the above identity is a series in $t$ with coefficients in
$\qs[\bx]$. But the left-hand side is obtained by replacing $x$ by $\bx$ in
the right-hand side. This implies that both sides are  independent of
$x$ and  equal to their constant term, that is, to
$-S(0)-S(-1)$ (by~\eqref{RS-def} and  Lemma~\ref{lemma-symmetric-G} again). Finally, since $S(y)$ is a
multiple of $(1+y)$ (see~\eqref{RS-def}), this constant term is simply $-S(0)$. 

We have thus obtained a new equation,
$$
S(Y_0)+S(Y_1)=R( \bx)+\bx +S(0).
$$
%and of course its companion:$$S(x^2Y_0)+S(x^2Y_1)=R( x)+x+S(0).$$
Combined with~\eqref{eqG1} and~\eqref{symmetricG}, 
%the first of these equations 
it gives
\beq\label{eq5G}
S(Y_1)-xY_1=R(x)+R(\bx)+2\bx-1/t+x+S(0).
\eeq

For our second symmetric function of $Y_0$ and $Y_1$,
we take a product derived from~\eqref{eqG1} and~\eqref{eq5G}:
\beq\label{eq-num}
(S(Y_0)-xY_0)(S(Y_1)-xY_1)= -R(x)\left( R(x)+R(\bx)+2\bx-1/t+x+S(0)\right).
\eeq
We want to extract the non-negative part in $x$. Let us
focus first on the left-hand side. By Lemma~\ref{lemma-symmetric-G},
the term $S(Y_0)S(Y_1)$  contributes $S(0)S(-1)$, which is zero since
$S(y)$ is a multiple of $(1+y)$. Then $x^2Y_0Y_1$ equals 1
by~\eqref{symmetricG}. We are left with the term
$-x(Y_0S(Y_1)+Y_1S(Y_0))$. By Lemma~\ref{lemma-symmetric-G}, the factor
between parentheses is a series in $t$ with polynomial coefficients in
$\bx$, and its constant term in $x$ is $-S(0)$. But we also need to
determine the coefficient of $\bx$ in this series. Expanding
$S(y)$ in powers of $y$ shows that
$$
Y_0S(Y_1)+Y_1S(Y_0)= (Y_0+Y_1)S(0) +Y_0Y_1 F(Y_0,Y_1),
$$
for some series $F(u,v)$ in $t$ with symmetric coefficients in $u$ and
$v$. Since $Y_0Y_1=\bx^2$, the coefficient of $\bx$ in
$Y_0S(Y_1)+Y_1S(Y_0)$ is the same as in $(Y_0+Y_1)S(0)$, namely
$S(0)/t$. We can now extract the non-negative part from~\eqref{eq-num}, and
this allows us to express the
non-negative part of $R(x)R(\bx)$:
$$
1+(x-1/t)S(0)= -R(x)^2-[x^{\ge}] R(x)R(\bx)-(2\bx-1/t+x+S(0)) R(x).
$$
(Recall that $R(x)$ is a multiple of $x$.)  Extracting the constant term in $x$
gives
$$
1-S(0)/t=-[x^0]R(x)R(\bx)-2R'(0).
$$
Since $R(x)R(\bx)$ is  symmetric in $x$ and $\bx$, we can now
reconstruct it:
\begin{align*}
  R(x) R(\bx)&= [x^{\ge}] R(x)R(\bx)+[x^{\le}]
  R(x)R(\bx)-[x^0]R(x)R(\bx)\\
&=
 -R(x)^2-(2\bx-1/t+x+S(0)) R(x)-R(\bx)^2\\
&\hskip 4mm -(2x-1/t+\bx+S(0)) R(\bx)
%-1-(x-1/t)S(0)-1-(\bx-1/t)S(0)+1-S(0)/t
-1-(\bx+x-1/t)S(0)
+2R'(0).
\end{align*}
That is,
\begin{multline}\label{eqRquad}
  R(x)^2+R(x)R(\bx)+R(\bx)^2
+(2\bx-1/t+x+S(0)) R(x)\\+(2x-1/t+\bx+S(0))
R(\bx)=2R'(0)-(\bx+x-1/t)S(0)-1.
\end{multline}

%=======================================================
\subsection{An equation for $\boldsymbol{R(x)}$ only}
 \label{sec:ordinaryDDE}
%==================================================================
We cannot extract the positive part of the above equation explicitly
because of  the ``hybrid'' term $R(x)R(\bx)$. However, the form
of the first three terms  suggests a multiplication by
$R(x)-R(\bx)$ to decouple the series in $x$ from the 
series in $\bx$. More
precisely, if we multiply~\eqref{eqRquad} by $R(x)-R(\bx) +\bx -x$, we
find a decoupled equation
\beq\label{Peq}
P(x)=P(\bx),
\eeq
with
\begin{multline}\label{P-def}
  P(x)= R(x)^3+ (S(0)+3\bx -1/t) R(x)^2\\
+\left( 2\bx^2-\bx/t+x/t-x^2-2R'(0)+(2\bx
  -1/t)S(0)\right)R(x)\\-x^2S(0)+x\left( 2R'(0)+S(0)/t-1\right).
\end{multline}
Given that $R(x)$ is a multiple of $x$, all terms in the expansion of
$P(x)$ have an exponent (of $x$) at least $-1$. But then~\eqref{Peq}
implies that $P(x)$ is a symmetric Laurent polynomial in $x$, of degree 1 and
valuation $-1$. More precisely,
$$
P(x)=[x^0]P(x) +(x+ \bx) [x]P(x)=[x^0]P(x) +(x+ \bx) [\bx]P(x).
$$
Thus, by expanding the expression \eqref{P-def} of $P(x)$ in $x$ at
order $0$, we 
find:
$$
P(x)= 2(x+\bx) R'(0) + R'(0)(2S(0)-1/t) +R''(0) .
$$
Returning to~\eqref{P-def}, this gives
\begin{multline}\label{R-cat}
  R(x)^3+ (S(0)+3\bx -1/t) R(x)^2\\
+\left( 2\bx^2-\bx/t+x/t-x^2-2R'(0)+(2\bx
  -1/t)S(0)\right)R(x)\\
=R''(0)+R'(0)(2S(0)+2\bx -1/t) +xS(0)(x-1/t)+x.
\end{multline}
Thus $R(x)$ satisfies a cubic equation over
$\qs(t,x,S(0),R'(0),R''(0))$, 
$$
\Pol(R(x), S(0),R'(0),R''(0),t,x)=0,
$$
with 
\begin{multline}\label{Pol-def}
  \Pol(x_0,x_1, x_2, x_3, t, x)=
 %tx_0^3x^2+ x(tx_1x+3t -x) x_0^2\\+\left( 2t-x-tx^4+x^3-2tx_2x^2+x(2t
 %-x)x_1\right)x_0\\-tx_3x^2-x_2x(2tx_1x+2t -x) -x^3x_1(tx-1)-x^3.
  x_0 ^3+ (x_1+3\bx -1/t) x_0 ^2\\
+\left( 2\bx^2-\bx/t+x/t-x^2-2x_2 +(2\bx
  -1/t)x_1\right)x_0 \\
-x_3-x_2 (2x_1+2\bx -1/t) -xx_1(x-1/t)-x.
\end{multline}
In particular, if we prove that the one-variable series $S(0)$,
$R'(0)$ and $R''(0)$ are algebraic over $\qs(t)$, then~\eqref{R-cat}
shows that $R(x)$ is algebraic over $\qs(t,x)$. Observe, as an
encouraging sign, that $R(x)=t(Q(x,0)-Q(0,0))$ has degree (at most) $3$ over
$\qs(t,x,S(0), R'(0), R''(0))$, in accordance with
Theorem~\ref{thm:main}.
Equation~\eqref{R-cat} is, in disguise, an ordinary discrete
differential equation of degree 3 and order 3, as discussed in
Section~\ref{sec:pdde}.

%=============================================================
\subsection{The generalized quadratic method}
\label{sec:quad}
%=============================================================
We have described in~\cite{mbm-jehanne} how to study  equations of the form
\beq\label{quad1}
\Pol(R(x), A_1, \ldots, A_k,t,x)=0,
\eeq
where $\Pol(x_0, x_1, \ldots, x_k, t,x)$ is a polynomial with
(say) rational coefficients, $R(x)\equiv R(x;t)$ is a \fps\ in $t$ with coefficients in
$\qs[x]$, and $A_1, \ldots, A_k$ are $k$ auxiliary series depending
on  $t$ only (in the above example, $\Pol$ is a \emm Laurent, polynomial in
$t$ and $x$, but this makes no difference). 
%The approachof~\cite{mbm-jehanne} extends the \emm kernel
%method,~\cite[Ch.~2]{knuth}, \cite{bousquet-petkovsek-1,hexacephale}
%and the \emmquadratic method,~\cite{brown-square,goulden-jackson},
%which respectively apply to polynomials of degree 1 and 2 in $x_0$.
The strategy of~\cite{mbm-jehanne} instructs us to look for power series
$X(t)\equiv X$ satisfying
\begin{equation}
\frac{\partial \Pol }{\partial x_0}(R(X), A_1, \ldots, A_k,t,X)=0.
\label{quad2} \end{equation}
Indeed, by differentiating~\eqref{quad1} with respect to $x$, we see
that any such series also satisfies
\begin{equation}  
 \frac{\partial \Pol }{\partial x}(R(X), A_1, \ldots, A_k,t,X)=0,
\label{quad3} \end{equation}          
and we thus obtain  three polynomial equations, namely
Eq.~\eqref{quad1} 
written for $x=X$, Eqs.~\eqref{quad2} and~\eqref{quad3}, that relate
the $(k+2)$ unknown series $R(X)$, $A_1, \ldots, A_k$ and~$X$.
 If we
can prove the existence of $k$ distinct series $X_1, \ldots, X_k$
satisfying~\eqref{quad2}, we will 
have $3k$ equations between the $3k$ unknown series $R(X_1), \ldots ,
R(X_k)$, $A_1, \ldots, A_k$, $X_1, \ldots, X_k$. If there is no redundancy
in this system, we will have proved 
%Thus, if this system(complemented with equations $(X_i-X_j)t_{ij}=0$
%that guarantee that the$X_i's$ are distinct) has dimension zero as
%one can expect, thisproves 
that each of the $3k$ unknown series is algebraic over
$\qs(t)$. 
%The algebraicity of $R(x;t)$ over $\qs(t,x)$ then follows, using~\eqref{quad1}. 

\medskip
We apply this strategy to~\eqref{R-cat}, with $A_1=S(0)$, $A_2=R'(0)$ and
$A_3=R''(0)$. Equation~\eqref{quad2} reads:
\begin{multline}\label{quad2G}
  3  R(X)^2+2 (S(0)+3/X -1/t) R(X)\\
+2/X^2-1/(tX)+X/t-X^2-2R'(0)+(2/X  -1/t)S(0)=0.
\end{multline}
Recall that $R(x):=t(Q(x,0)-Q(0,0))$ and $S(0):=tQ(0,0)$ are multiples
of $t$. Hence, once multiplied by $tX^2$,  this equation has the following form: 
$$
X(1-X)(1+X)=t \Pol_1(Q(X,0), R'(0), Q(0,0), t,X).
$$
 This shows that there exists exactly three series in $t$, denoted
$X_0$, $X_1$ and $X_2$, that cancel~\eqref{quad2G}. Their constant
 terms are respectively 0, 1 and $-1$. Due to the special
 form of our polynomial $\Pol$ (given by~\eqref{Pol-def}), it is in fact simple to determine these
series. Indeed, we observe that
\beq\label{simple}
tx^2 \left( \frac{\partial \Pol}{\partial x_0} + x^2  \frac{\partial
  \Pol}{\partial x} \right)= (1-x)(1+x)(2tx^2+2t-x)(x_0 x+x_1x+1),
\eeq
and since the three series $X_i$ cancel both partial derivatives of $\Pol$,
we conclude that 
$
X_1=1$, $X_2=-1$, while $X_0$ is the only formal power series in $t$ that cancels
$(2tX^2+2t-X)$ (a Catalan-ish series, the exact expression of which we
will not use). The fourth factor in~\eqref{simple} cannot vanish for
$x_0=R(X)$ and $x_1=S(0)$ because of the factor $t$ occurring in these series.

Let $\Disc(x)$ be the discriminant of $\Pol(x_0, S(0), R'(0),
R''(0),t,x)$ with respect to $x_0$. It is obtained by eliminating $x_0$
between this polynomial and its derivative with respect to $x_0$.  Then $\Disc(X_i)=0$ for
$i=0,1,2$, and  this gives  three polynomial
equations relating  $S(0)$, $R'(0)$, $R''(0)$, $t$ and $X_0$. In
fact, we observe that $\Disc(x)$
   is
symmetric in $x$ and $\bx$ (this comes from the construction
of~\eqref{R-cat}) and is thus a polynomial in $s=x+\bx$. So  we can alternatively  write that this polynomial  vanishes at $s=\pm 2$ and $s=1/(2t)$.
%has a factor
%$F(x)=
%$(1-x)^2(1+x)^2(2tx^2+2t-x)$.
%$(s-2)(s+2)(2ts-1)$.
Systematically   eliminating two of the three  series $S(0)$,
$R'(0)$ and $R''(0)$ in the resulting system of
three equations  yields an algebraic equation for
each of them:   for $R'(0)$ this
equation has degree 4, 
and degree 8 for the other two series. The equations for $R'(0)$ and $S(0)$ are
shown below. From this, one can derive an
algebraic equation for $R(x)$ (using~\eqref{R-cat}), that is, for $Q(x,0)$,
and then one for $S(y)=t(1+y)Q(0,y)$,
as explained below. Finally, the original functional equation~\eqref{qdp-gessel1}
proves the algebraicity of $Q(x,y)$.

%=======================================================
\subsection{Rational parametrization}
%=======================================================
 To avoid handling big polynomials, it is convenient to use
rational parametrizations of all these series. To begin with, the equation satisfied by
$R'(0)$ is:
\begin{multline*}
  729t^6R'(0)^4+243t^4(4t^2+1)R'(0)^3-27t^2(14t^4+19t^2-1)R'(0)^2\\
-(20t^2-1)(7t^2-6t+1)(7t^2+6t+1)R'(0)-t^2(343t^4-37t^2+1)=0.
\end{multline*}
It  has genus 0 ---so Maple
tells us--- and can be parametrized by introducing
the unique \fps \  $T  \equiv T (t)$  in
$t$ with constant term $1$ 
%(resp. 1, 0), 
satisfying~\eqref{T-def}.
%\beq\label{T1}
%t^2= \frac{T(1-T+T^2)^3}{(1-T)^2(1+T)^6}.
%\T  =t^2(1-\T  )(1+3\T  )^3, \quad
%\Tm= 1+256\, t^2 \frac{\Tm^3}{(\Tm+3)^3}.
%\quad \Tbis=t^2\frac{(1+4\Tbis)^3}{(1+\Tbis)^3}.
%\eeq
Then 
$$
%R'(0)=t[x] Q(x,0)=\T  (1-3\T +3\T ^2),\quad 
R'(0)=\frac{(\Tm-1)(21-6\Tm+\Tm^2)}{(\Tm+3)^3}.
%\quad R'(0)= \frac{\Tbis (1+\Tbis)^3}{(1+4\Tbis)^3}.
$$ 
Rational  parametrizations can be  computed via
the Maple command {\tt parametrization}. 
%As explained in the introduction, 
The one that we originally obtained has a slightly different
form, see~\eqref{T-param-original}. 

The equation satisfied by $S(0)$ is:
\begin{multline*}
  27t^7S(0)^8+108t^6S(0)^7+189t^5S(0)^6+189t^4S(0)^5-9t^3(32t^4+28t^2-13)S(0)^4
\\
-9t^2(64t^4+56t^2-5)S(0)^3-2t(256t^6-312t^4+156t^2-5)S(0)^2\\
-(32t^2-1)(4t^2-6t+1)(4t^2+6t+1)S(0)-t(256t^6+576t^4-48t^2+1) =0.
\end{multline*}
Since $S(0)=tQ(0,0)$, this gives an equation for $Q(0,0)$, involving
only even powers of $t$. If we replace $t^2$ by its rational
expression in $T$ derived from~\eqref{T-def}, this equation of degree
8 in $Q(0,0)$ factors into
a quadratic term and one of degree 6. Injecting the first
few coefficients of $Q(0,0)$ shows that the term that vanishes is the
quadratic one. So $Q(0,0)$ has degree $2$ over $\qs(T)$, and can be
written in terms of $Z=\sqrt T$ as stated in Theorem~\ref{thm:main}:
% a square root, or in parametric form:
$$
%Q(0,0)=\frac{(1-\Z)(1+\Z)^3(1-3\Z+3\Z^2+\Z^3)}{ (1-\Z+\Z^2)^3},\quad
Q(0,0)=\frac{32\,\Zm ^3(3+3\Zm -3\Zm ^2+\Zm ^3)}{(1+\Zm )(\Zm ^2+3)^3}.
%\quad Q(0,0)=\frac{(1-\Z)(1+\Z)^3(1-3\Z+3\Z^2+\Z^3)}{ (1-\Z+\Z^2)^3},
$$
%where $Z$ is the only power series in $t$ with constant term 0satisfying  $$\Z=\T (1-\Z+\Z^2), \quad \Zm=\sqrt \Tm, \quad \Zbis= \Tbis (1-\Zbis)^2$$or equivalently,\beq\label{Z-def}\Z=t^2\frac{(1-\Z)^2(1+\Z)^6}{(1-\Z+\Z ^2)^3},\quad\Zm^2= 1+256\, t^2 \frac{\Zm^6}{(\Zm^2+3)^3},\quad\Z=t^2\frac{(1-\Z)^2(1+\Z)^6}{(1-\Z+\Z ^2)^3}.\eeq
%
%
The series $R''(0)/t$ is also found to  have a  rational expression in
$Z$: % (pareil pour le 3e et le 1er):
%$$R''(0)=2t [x^2 ] Q(x,0)=\frac{\Z(1-\Z ^2)^3(1-4\Z +3\Z ^2+4\Z ^3+3\Z ^4+\Z ^6)}{(1-\Z +\Z ^2)^6},\quad$$
$$
R''(0)=2t [x^2]Q(x,0)=1024\,t\,\frac{ \Zm ^3(\Zm -1) (1+2\Zm +7\Zm ^2-\Zm ^4-2\Zm ^5+\Zm ^6)}{(1+\Zm )(3+\Zm ^2)^6}.
$$

We now go back to the equation~\eqref{R-cat} satisfied by
$R(x)=t(Q(x,0)-Q(0,0))$. Injecting  the above expressions of
$S(0)=tQ(0,0)$, $R'(0)$ and $R''(0)$  in it gives a cubic equation for
$Q(x,0)$ over $\qs(t,Z,x)$. We now consider $Q(xt,0)$ instead of
$Q(x,0)$, since it is an even series in $t$. Using~\eqref{T-def} to
express $t^2$ in terms of $T=Z^2$, we
see that this series is cubic over $\qs(Z,x)$. 

We finally construct an equation for $ Q(0,y)$ 
%(or for $S(y)=t(1+y)Q(0,y)$) 
thanks to the kernel equation~\eqref{eqG1}. 
Written for $xt$ instead of $x$, it reads
$$
t (Q(xt,0)-Q(0,0))+t(1+y)Q(0,y)=xty,
$$
with $y=Y_0(xt)$. Thus the equation we have obtained for $Q(xt,0)$
gives a cubic equation for $Q(0,y)$, and eliminating $x$ between this
equation and $ K(xt,y)=0$ gives a (still cubic) equation for
$Q(0,y)$ over $\qs(Z,y)$,  valid for a generic value of $y$.

\medskip
The equations for $Q(xt,0)$ and $Q(0,y)$ take a few Maple lines each
(four or five), and we do not give them explicitly.  They can be
seen in the accompanying  \href{http://www.labri.fr/perso/bousquet/publis.html}{Maple session}.
They factor if we express $x$ and $y$ in terms of van Hoeij's series $U$ and $V$,
as described in  Theorem~\ref{thm:main}.
%, as they have genus 0 again (in $x$ and $Q$ for the first one, $y$
%and $Q$ for the second one). 
Our alternative parametrization, given below the theorem, was obtained
thanks to 
the {\tt parametrization} 
command in Maple. One can use this command with $Z$ as a parameter, but it is faster to compute a
parametrization for a few values of $Z$ (e.g. $Z=10$, $Z=100$) and then reconstruct a generic
one.  %This is how we obtained the parametrization 
%given below Theorem~\ref{thm:main}.
\qed

\begin{comment}\bigskip
\noindent{\bf  Remark.}
 A further simplification can be
  obtained   by writing that $\Disc(x)$ has a factor
  $F(x)^2$. This can be seen by computing  $\Disc'(x)$, which is
  found to be a multiple  $F$. It is  a general result that the
  roots $X_i$ are double roots of the discriminant,
  see~\cite[Thm.~14]{mbm-jehanne}.
\end{comment}

%%%%%%%%%%%%%%%%%%%%%%%%%%%%%%%%%%%%%%%%
\section{More algebraic models}
%%%%%%%%%%%%%%%%%%%%%%%%%%%%%%%%%%%%%%%

It is natural to ask when the symmetric functions of the roots of the
  kernel are polynomials in $\bx$, since this property plays  a crucial role in our
proof. Let us consider each of the 23 models with small steps and a finite
group~\cite{BoMi10}, and  denote by $Y_0$
and $Y_1$ the  roots of the kernel
$$
K(x,y)=1-t \sum_{(i,j) \in \cS}x^i y^j.
$$
It is not hard to see that 
$Y_0+Y_1$ and $Y_0Y_1$ are polynomials in $\bx$ if and only if the
only step of the form $(i,1)$ is $(1,1)$. We  find 4 models having this
property.
\begin{itemize}
\item The first one is  Gessel's model $\cS=\{
 \rightarrow,\nearrow,\leftarrow, \swarrow\}$, which we have just solved.
\item
Then comes Kreweras' model $\cS=\{\leftarrow, \downarrow, \nearrow\}$,
which is also algebraic, and was
solved in~\cite{bousquet-versailles} using the same principles as
in this paper (see~\cite{Bous05} for a variant). The solution is simpler
than in Gessel's case in two aspects: first, the diagonal symmetry
implies that $Q(x,0)=Q(0,x)$, so that we have only one unknown series
$R(x)$, not $R(x)$ and $S(y)$ as before; then, and more importantly, the equation obtained by
forming a symmetric function of $Y_0$ and $Y_1$ only involves
$R(x)$, not $R(\bx)$.
\item Finally, we also have the models $\{\leftarrow,  \searrow, \nearrow\}$ and $\{\leftarrow,
  \rightarrow,  \searrow, \nearrow\}$, which are known to be
  D-finite~\cite{BoMi10} but transcendental (this can be derived from~\cite[Thm.~4]{BoRaSa12}).
\end{itemize}
More recently, Kauers and Yatchak initiated a study of walks in the
quadrant with multiple steps~\cite{kauers-yatchak}. Such walks
naturally arise, after projection,
in the study of 3D walks confined to the first octant~\cite{BoBoKaMe}. In
particular, it was proved in~\cite{BoBoKaMe}, using computer algebra,
that the model  $\{\leftarrow, \swarrow, \searrow, \rightarrow,
\rightarrow, \nearrow\}$ is algebraic (note the double East step). This was generalized by Kauers
and Yatchak~\cite{kauers-yatchak}, who proved that algebraicity persists if one includes a
South step, with arbitrary multiplicity~$\lambda$. For this model,
the symmetric functions of the roots of the kernel are polynomials in
$\bx$, and we can solve it using the  tools of this
paper. In fact, the proof is only marginally more difficult than in
Kreweras' case: there are two unknown series $R(x)$ and $S(y)$, but the equations obtained by
forming symmetric functions of $Y_0$ and $Y_1$ only involve
$R(x)$, not $R(\bx)$.

 Let us briefly sketch the main steps of the
solution. The basic equation reads:
\beq\label{eq-mult}
K(x,y) xy Q(x,y)= xy -R(x)-S(y),
\eeq
where $K(x,y)=1-t(\bx+\bx\by +\lambda\by +x\by +2x +xy)$ is the
kernel, 
\beq\label{RSdef}
R(x)=t(1+\lambda x +x^2) Q(x,0)-tQ(0,0) \quad \hbox{and} \quad
S(y)=t(1+y)Q(0,y).
\eeq
The group of this walk has order 6, and the orbit of $(x,Y_0)$
contains exactly 4 pairs $(x',y')$ for which $Q(x',y')$ is well defined:
$$
(x,Y_0), \quad \left( \frac{t(1+Y_1)}{1+\lambda t}, Y_0\right), \quad
 \left( \frac{t(1+Y_0)}{1+\lambda t}, Y_1\right), \quad 
\left( \frac{t(1+Y_0)}{1+\lambda t}, x/t+\lambda x -1\right).
$$
Each such pair gives an equation $x'y' =R(x')+S(y')$. The first
equation expresses $S(Y_0)$ in terms of $R(x)$, and a combination of
the last two expresses $S(Y_1)$ in terms of $S(x/t+\lambda x -1)$. By
extracting the positive part in the resulting expression of  $S(Y_0)+S(Y_1)$, we
conclude that
\beq\label{pm}
S(Y_0)+S(Y_1)= \bx \qquad\hbox{and} \qquad x = S(x/t+\lambda x -1)-R(x).
\eeq
From this we form a second symmetric function of $Y_0$ and $Y_1$:
$$
(S(Y_0)-xY_0)(S(Y_1)-xY_1)=-R(x)\left( R(x)+2x+2\bx -1/t\right).
$$
Extracting the positive part in $x$ gives
$$
\lambda x+x^2= -R(x)\left( R(x)+2x+2\bx -1/t\right)+ 2R'(0).
$$
This discrete differential equation can be solved using the method of
Section~\ref{sec:quad}. One obtains a cubic equation for $R'(0)$, and one of
degree 6 for $R(x)$. Finally, saying that $R(x)$ equals $-tQ(0,0)$ modulo
$(1+\lambda x+x^2)$ (see~\eqref{RSdef}) gives, by taking the above
equation modulo $(1+\lambda x+x^2)$, 
$$
t^2Q(0, 0)^2+(2\lambda t+1)Q(0, 0)=2R'(0)+1,
$$
and proves the algebraicity of $Q(0,0)$. One can then compute an
equation for $S(y)$ using the second part of \eqref{pm}. The
algebraicity of $Q(x,y)$ follows using the equation~\eqref{eq-mult} we
started from. \qed

\bigskip
To complete our guided tour of (conjecturally) algebraic models, let us mention   three  models that were discovered by Kauers
and Yatchak~\cite{kauers-yatchak}:
\begin{center}
  \begin{tikzpicture}[scale=.3] % q6654
    \draw[->] (0,0) -- (-1,0) node[left] {$\scriptstyle 1$};
    \draw[->] (0,0) -- (-1,1) node[left] {$\scriptstyle 1$};
    \draw[->] (0,0) -- (0,1) node[above] {$\scriptstyle 2$};
    \draw[->] (0,0) -- (1,1) node[right] {$\scriptstyle 1$};
    \draw[->] (0,0) -- (1,0) node[right] {$\scriptstyle 2$};
    \draw[->] (0,0) -- (1,-1) node[right] {$\scriptstyle 1$};
    \draw[->] (0,0) -- (0,-1) node[below] {$\scriptstyle 1$};
  \end{tikzpicture}\hfil
  \begin{tikzpicture}[scale=.3] % q1599
    \draw[->] (0,0) -- (-1,-1) node[left] {$\scriptstyle 1$};
    \draw[->] (0,0) -- (-1,0) node[left] {$\scriptstyle 2$};
    \draw[->] (0,0) -- (-1,1) node[left] {$\scriptstyle 1$};
    \draw[->] (0,0) -- (0,1) node[above] {$\scriptstyle 1$};
    \draw[->] (0,0) -- (1,0) node[right] {$\scriptstyle 1$};
    \draw[->] (0,0) -- (1,-1) node[right] {$\scriptstyle 1$};
    \draw[->] (0,0) -- (0,-1) node[below] {$\scriptstyle 2$};
  \end{tikzpicture}\hfil
  \begin{tikzpicture}[scale=.3] % q5259
    \draw[->] (0,0) -- (-1,-1) node[left] {$\scriptstyle 1$};
    \draw[->] (0,0) -- (-1,0) node[left] {$\scriptstyle 2$};
    \draw[->] (0,0) -- (-1,1) node[left] {$\scriptstyle 1$};
    \draw[->] (0,0) -- (0,1) node[above] {$\scriptstyle 2$};
    \draw[->] (0,0) -- (1,0) node[right] {$\scriptstyle 1$};
    \draw[->] (0,0) -- (1,1) node[right] {$\scriptstyle 1$};
    \draw[->] (0,0) -- (0,-1) node[below] {$\scriptstyle 1$};
  \end{tikzpicture}
\end{center}
The weights indicate multiplicities. All three models have a group of
order 10. We can solve the first two  using half-orbit sums, as
in~\cite[Sec.~6]{BoMi10}, and we find them to be algebraic indeed.
Regarding the third model,   its algebraicity should follow from a
complex analysis approach~\cite{kilian-private}, as for Gessel's model in~\cite{BoKuRa13}. 
%remains the onlymodel that is (conjecturally) algebraic and has not
%been solved in an elementary way (nor in any other way, in fact).

Finally, let us recall that another natural way of weighting steps
comes from the study of Markov chains in the
quadrant~\cite{FaIaMa99}. In this setting each step is weighted by its
probability. It may be possible to construct a Markov chain with Gessel's steps
and stationary distribution given by an algebraic \gf, provable by the
tools of this paper. This would parallel the case of Kreweras' walks:
they first appeared as a counting problem  in
1965~\cite{kreweras}, and then 20 years later, independently, 
 as a  Markov chain with algebraic stationary
 distribution~\cite{flatto-hahn}. Many years later, the algebraicity
 of both problems was proved by a common elementary approach~\cite{Bous05}, using
 ideas that we have extended in this paper.

\bigskip
\noindent{\bf Acknowledgements.} The author is grateful to Alin
Bostan, Tony Guttmann
and Kilian Raschel for various comments on this manuscript. She also
thanks the organizers of the workshop ``Approximation and
Combinatorics'' in 2015 in CIRM (Luminy, F) Herwig Hauser and Guillaume Rond,  for very
interesting discussions on the algebraic aspects of quarter plane equations.
%%%%%%%%%%%%%%%
% Bibliography
%%%%%%%%%%%%%%%

 \bibliographystyle{plain}
\bibliography{qdp}

%\newpage

\end{document}